\theoremstyle{plain} 
\newtheorem{theorem}{Theorem}
\newtheorem{lemma}[theorem]{Lemma}
\def\us{\underset}
\begin{document}

\title{On the $L^1$ norm of an exponential sum involving the divisor function}
\author{D. A. Goldston and M. Pandey}
\thanks{$^{*}$ The first author was in residence at the Mathematical Sciences
Research Institute in Berkeley, California (supported by the National Science Foundation under Grant
No. DMS-1440140), during the Spring 2017 semester.}
%\address{D. A. Goldston, Department of Mathematics and Statistics,
 %San Jos\'{e} State University,
% San Jos\'{e}, California 95192-0103,
% United States of America}
\date{\today}
%\email{daniel.goldston@sjsu.edu}

\maketitle

\section{Introduction}
Let $\tau(n) = \sum_{d|n} 1$ be the divisor function, and 
\begin{align*}
S(\alpha) = \sum_{n\le x} \tau(n)e(n\alpha),  \qquad  e(\alpha) = e^{2\pi i \alpha}.
\end{align*}
In 2001 Brudern \cite{Brudern} considered the $L^1$ norm of $S(\alpha)$ and claimed to prove
\begin{equation}   \sqrt{x} \ll \int_0^1 |S(\alpha)| d\alpha \ll \sqrt{x}. \end{equation}
However there is a mistake in the proof given there which depends on a lemma which is false. 
In this note we prove the following result.
\begin{theorem}
We have 
\label{thm}
\begin{equation}
\sqrt{ x} \ll \int_0^1 |S(\alpha)|d\alpha\ll \sqrt{x}\log x.
\end{equation}
\end{theorem}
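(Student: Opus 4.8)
The plan is to treat the two bounds by completely different methods. For the upper bound I would start from Dirichlet's hyperbola identity, writing
\begin{equation*}
S(\alpha)=2\sum_{d\le\sqrt x}\ \sum_{e\le x/d}e(de\alpha)-\sum_{d\le\sqrt x}\ \sum_{e\le\sqrt x}e(de\alpha),
\end{equation*}
which is legitimate since $\tau(n)=\#\{(d,e):de=n\}$ and the two halves of the hyperbola are symmetric. Each inner sum is geometric in $e$, so with $\|\beta\|$ denoting the distance from $\beta$ to the nearest integer one has $\bigl|\sum_{e\le y}e(e\gamma)\bigr|\le\min\bigl(y,\tfrac{1}{2\|\gamma\|}\bigr)$, giving
\begin{equation*}
|S(\alpha)|\le 2\sum_{d\le\sqrt x}\min\!\Bigl(\tfrac{x}{d},\tfrac{1}{2\|d\alpha\|}\Bigr)+\sum_{d\le\sqrt x}\min\!\Bigl(\sqrt x,\tfrac{1}{2\|d\alpha\|}\Bigr).
\end{equation*}
Now I would integrate term by term. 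Substituting $\beta=d\alpha$ and using periodicity shows $\int_0^1\min(M,\tfrac{1}{2\|d\alpha\|})\,d\alpha=\int_0^1\min(M,\tfrac{1}{2\|\beta\|})\,d\beta\ll\log M$, the elementary $L^1$ estimate for a truncated cotangent. Hence the $d$-th term contributes $\ll\log(x/d)$ and $\ll\log\sqrt x$ respectively, and summing over $d\le\sqrt x$ yields $\int_0^1|S(\alpha)|\,d\alpha\ll\sum_{d\le\sqrt x}\log x\ll\sqrt x\log x$. This recovers exactly the stated upper bound, with the extra $\log x$ appearing naturally; removing it would require genuine cancellation between the $d$-terms, which is presumably the point at which Br\"udern's (false) lemma was invoked.

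For the lower bound I would run a Farey/major-arc argument, crucially exploiting that $|S|\ge 0$, so that contributions from disjoint arcs \emph{add} with no cancellation. Let $Q=\lfloor c\sqrt x\rfloor$ for a small constant $c$, and for each reduced fraction $a/q$ with $q\le Q$ form the arc $\mathfrak M(a/q)=\{a/q+\beta:|\beta|\le\tfrac{1}{4x}\}$. Distinct Farey fractions of order $Q$ differ by at least $1/Q^2\ge 1/x$, so these arcs are pairwise disjoint, and therefore
\begin{equation*}
\int_0^1|S(\alpha)|\,d\alpha\ \ge\ \sum_{q\le Q}\ \sum_{\substack{a=1\\(a,q)=1}}^{q}\ \int_{\mathfrak M(a/q)}|S(\alpha)|\,d\alpha .
\end{equation*}
The engine is the Estermann asymptotic for the twisted divisor sum: for $(a,q)=1$,
\begin{equation*}
\sum_{n\le t}\tau(n)e(na/q)=\frac{t}{q}\Bigl(\log\frac{t}{q^2}+2\gamma-1\Bigr)+\frac14+E(t;a/q).
\end{equation*}
Feeding the main term into partial summation against $e(t\beta)$, and noting that for $|\beta|\le\tfrac{1}{4x}$ all phases $t\beta$ lie in $[0,\tfrac14]$ so that $\operatorname{Re}e(t\beta)$ stays positive, I would obtain $\operatorname{Re}S(a/q+\beta)\gg \tfrac{x}{q}$ throughout $\mathfrak M(a/q)$ (the factor $\log(t/q^2)$ is bounded below by a positive constant once $q\le c\sqrt x$ and $t\asymp x$). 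Thus $\int_{\mathfrak M(a/q)}|S|\gg \tfrac{x}{q}\cdot\tfrac1x=\tfrac1q$, and summing gives
\begin{equation*}
\int_0^1|S(\alpha)|\,d\alpha\ \gg\ \sum_{q\le Q}\frac{\phi(q)}{q}\ \asymp\ Q\ \asymp\ \sqrt x,
\end{equation*}
since $\sum_{q\le Q}\phi(q)/q\sim\tfrac{6}{\pi^2}Q$.

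The main obstacle in this plan is making the arc estimate $|S(a/q+\beta)|\gg x/q$ uniform in $q$ all the way up to order $\sqrt x$. This is exactly the regime where the main term $\tfrac{x}{q}\log(x/q^2)$ degrades (it vanishes at $q=\sqrt x$), so one cannot afford to take $Q$ much smaller than $\sqrt x$ without losing the target $\sqrt x$; yet the trivial bound on the off-diagonal contribution (the terms with $q\nmid d$) is of size $\asymp xq$, hopelessly larger than the main term. Everything therefore hinges on controlling $E(t;a/q)$, and its dependence on the shift $\beta$, with a saving that survives uniformly for $q$ as large as $c\sqrt x$; securing this uniform error term is where I would expect to spend essentially all of the effort, and it is the natural point at which the lower bound $\sqrt x$ (rather than $\sqrt x\log x$) becomes the honest, safely provable assertion.
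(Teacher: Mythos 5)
Your upper bound is correct, and it is genuinely simpler than the paper's argument. The paper also starts from the hyperbola decomposition, but then runs a Farey dissection, splitting the sum $T(\alpha)=\sum_{u\le\sqrt x}\sum_{u<v\le x/u}e(uv\alpha)$ near each $a/q$ into the terms with $q\mid u$ (estimated by $\min(x,|\beta|^{-1})q^{-1}\log(2\sqrt x/q)$, contributing $\ll\sqrt x$) and the terms with $q\nmid u$ (estimated by $(\sqrt x+q)\log q$ via the spacing of $au \bmod q$, contributing $\ll\sqrt x\log x$). You bypass all of this: bounding each geometric sum pointwise by $\min(x/d,\tfrac{1}{2\|d\alpha\|})$ and using $\int_0^1\min(M,\tfrac{1}{2\|d\alpha\|})\,d\alpha=\int_0^1\min(M,\tfrac{1}{2\|\beta\|})\,d\beta\ll\log M$ (correct, by periodicity of $\|d\alpha\|$ in $\alpha$ with period $1/d$) gives $\ll\sqrt x\log x$ in a few lines. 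Both routes pay the same $\log x$ for applying the triangle inequality in $d$; your version makes clear the loss has nothing to do with the dissection.

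The lower bound, however, has a genuine gap, and you have correctly located it but not closed it: your plan needs the pointwise bound $|S(a/q+\beta)|\gg x/q$ uniformly in $a$ for $q$ up to $c\sqrt x$, and nothing smaller works, since $\sum_{q\le Q_0}\frac{\varphi(q)}{q}\log(x/q^2)\asymp Q_0\log(x/Q_0^2)$ is already $o(\sqrt x)$ for $Q_0=\sqrt x/\log x$. But at $q\asymp\sqrt x$, $t\asymp x$ the Estermann main term is only $\asymp\sqrt x$, while no known bound for $E(t;a/q)$ uniform in $a$ is $o(\sqrt x)$ in that range (Estermann-type errors are of shape $\sqrt{qt}$ or $(qt)^{1/3+\epsilon}$, both $\ge x^{1/2}$ there), so the step you defer is not an error-term chore but the entire difficulty, and it cannot be secured pointwise. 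The missing idea is Vaughan's: apply the triangle inequality to the sum over numerators \emph{before} taking absolute values, $\sum_{a}|S(a/q+\beta)|\ge|U_q(x;\beta)|$ with $U_q(x;\beta)=\sum_{n\le x}\tau(n)c_q(n)e(n\beta)$, $c_q$ the Ramanujan sum. Averaging over the $\varphi(q)$ values of $a$ boosts the main term from $x/q$ to $\frac{\varphi(q)}{q}x\bigl(\log(x/q^2)+2\gamma-1\bigr)\asymp x$ (positive for $q\le\frac12\sqrt x$ since $\log(x/q^2)\ge\log 4$), while by the Pongsriiam--Vaughan theorem on $\sum_{n\le x,\,d\mid n}\tau(n)$ the error is only $O\bigl(q\tau(q)(x^{1/3}+q^{1/2})x^\epsilon\bigr)\ll x^{5/6+\epsilon}$, uniformly up to $q\asymp\sqrt x$. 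Your partial-summation/positivity device for passing from $\beta=0$ to $|\beta|\le 1/(4x)$ is sound (it is the paper's $1-\pi/4$ constant in disguise), but it must be applied to $U_q$, not to the individual $S(a/q+\beta)$, for the argument to close.
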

The upper bound here is obtained by following Brudern's proof with corrections. The lower bound is based on the method Vaughan introduced to study the $L^1$ norm for exponential sums over primes \cite{Va}, and also makes use of a more recent result of Pongsriiam and Vaughan \cite{PV} on the divisor sum in arithmetic progressions. We do not know whether the upper bound or the lower bound reflects the actual size of the $L^1$ norm here.

\section{Proof of the upper bound}
Let $u$ and $v$ always be positive integers. Following Br\"udern, we have
\[ \begin{split} 
S(\alpha) &=  \sum_{n\le x} \left( \sum_{uv=n}1\right) e(n\alpha) \\&=\sum_{uv\le x}e( uv\alpha) \\&=  2\sum_{u\le\sqrt{x}}\sum_{u < v\le x/u} e( uv\alpha) + \sum_{u\le\sqrt{x}} e( u^2\alpha)\\ &
:= 2T(\alpha) + V(\alpha) .
\end{split} \]
By  Cauchy-Schwarz and Parseval
\[ \int_0^1 |V(\alpha)|\, d\alpha \le \left( \int_0^1 |V(\alpha)|^2\, d\alpha \right)^{\frac12} = \sqrt{ \lfloor \sqrt{x}\rfloor} \le x^{\frac14} ,\]
and therefore by the triangle inequality
\[ \int_0^1 |S(\alpha)|\, d\alpha = 2 \int_0^1 |T(\alpha)|\, d\alpha + O(x^{\frac14}). \]
Thus to prove the upper bound in Theorem 1 we need to establish
\begin{equation} \label{integral}
\int_0^1 |T(\alpha)| d\alpha\ll\sqrt{x}\log x.
\end{equation}

We proceed as in the circle method. Clearly in \eqref{integral} we can replace the integration range $[0,1]$  by $[1/Q, 1+1/Q]$. By Dirichlet's theorem for any $\alpha \in [1/Q, 1+1/Q]$ we can  find a fraction $\frac{a}{q}$, $1\le q \le Q$,  $1\leq a \le q$, $(a,q)=1$, with $|\alpha - \frac{a}{q}| \le 1/(qQ)$. Thus the intervals $ [\frac{a}{q}- \frac{1}{qQ}, \frac{a}{q} +\frac{1}{qQ} ] $ cover the interval $[1/Q,1+1/Q]$.  Taking 
\begin{equation}  \label{Qequation}  2\sqrt{x} \le Q \ll \sqrt{x},  \end{equation}
we obtain
\[
\int_0^1 |T(\alpha)|d\alpha 
\le \sum_{q\le Q}\sum_{\substack{a = 1\\ (a,q)=1}}^q \int_{\frac{a}{q}-1/(2q\sqrt{x})}^{\frac{a}{q}+1/(2q\sqrt{x})} \left\vert T(\alpha)\right\vert d\alpha. 
\]
On each interval $ [\frac{a}{q}- \frac{1}{2q\sqrt{x}}, \frac{a}{q} +\frac{1}{2q\sqrt{x}} ] $ we decompose $T(\alpha)$
into
\[  T(\alpha) = F_q(\alpha) + G_q(\alpha) \]
where
\[  F_q(\alpha) = \sum_{\substack{u\le\sqrt{x}\\ q|u}}\sum_{u < v\le x/u} e( uv\alpha) \]
and 
\[  G_q(\alpha) = \sum_{\substack{u\le\sqrt{x}\\ q\nmid u}}\sum_{u < v\le x/u} e( uv\alpha), \]
and have
\[ \begin{split} \int_0^1 |T(\alpha)|d\alpha 
&\le  \sum_{q\le Q}\sum_{\substack{a = 1\\ (a,q)=1}}^q \int_{\frac{a}{q}-1/(2q\sqrt{x})}^{\frac{a}{q}+1/(2q\sqrt{x})} \left\vert F_q(\alpha)\right\vert d\alpha + \sum_{q\le Q}\sum_{\substack{a = 1\\ (a,q)=1}}^q \int_{\frac{a}{q}-1/(2q\sqrt{x})}^{\frac{a}{q}+1/(2q\sqrt{x})} \left\vert G_q(\alpha)\right\vert d\alpha\\ &
:=  I_F + I_G. \end{split}
\]
The upper bound in Theorem 1 follows from the following two lemmas.
\begin{lemma} [Br\"udern]   We have 
\[  I_F \ll \sqrt{x}. \]
\end{lemma}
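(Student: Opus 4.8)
The plan is to exploit the one feature that distinguishes $F_q$ from $G_q$: every term of $F_q$ has $q\mid u$. First I would parametrize the arc about $a/q$ by $\beta=\alpha-\tfrac{a}{q}$, with $|\beta|\le \tfrac{1}{2q\sqrt x}$, and write $u=qw$ with $w\le \sqrt x/q$. Since $qwv\cdot\tfrac{a}{q}=wva\in\ZZ$, each exponential collapses,
\[ e(uv\alpha)=e\bigl(qwv(\tfrac{a}{q}+\beta)\bigr)=e(qwv\beta), \]
so that $F_q(\tfrac{a}{q}+\beta)=F_q(\beta)$ does not depend on $a$ at all. Consequently the inner sum over the $\phi(q)$ residues $a$ with $(a,q)=1$ contributes only a factor $\phi(q)$, and
\[ I_F=\sum_{q\le Q}\phi(q)\int_{-1/(2q\sqrt x)}^{1/(2q\sqrt x)}\bigl|F_q(\beta)\bigr|\,d\beta. \]
This reduction is the crux of the matter; everything afterwards is estimation.

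Next I would bound $|F_q(\beta)|$ on the arc. For $q>\sqrt x$ the sum is empty, so take $q\le\sqrt x$. The inner sum over $v$ is geometric with ratio $e(qw\beta)$, hence of size $\ll\min\bigl(\tfrac{x}{qw},\tfrac{1}{\|qw\beta\|}\bigr)$, where $\|\cdot\|$ denotes the distance to the nearest integer. On the arc $|qw\beta|\le\tfrac{1}{2q}\le\tfrac12$, so $\|qw\beta\|=qw|\beta|$ and the bound becomes $\tfrac{1}{qw}\min(x,1/|\beta|)$. Summing $\sum_{w\le\sqrt x/q}\tfrac1w\ll 1+\log(\sqrt x/q)$ gives
\[ |F_q(\beta)|\ll\frac{1}{q}\bigl(1+\log(\sqrt x/q)\bigr)\min\bigl(x,\tfrac{1}{|\beta|}\bigr). \]

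Integrating, and using $\int_{-1/(2q\sqrt x)}^{1/(2q\sqrt x)}\min(x,1/|\beta|)\,d\beta\ll 1+\log(\sqrt x/q)$, together with $\phi(q)/q\le 1$, I would obtain
\[ I_F\ll\sum_{q\le\sqrt x}\bigl(1+\log(\sqrt x/q)\bigr)^2. \]
The hard part, or rather the only place where one must be careful, is this final sum: the crude bound $\log(\sqrt x/q)\le\log x$ would cost an extra $\log^2 x$ and give only $\sqrt x\log^2 x$. The point is that $\log(\sqrt x/q)$ is $O(1)$ for the typical $q\asymp\sqrt x$, and comparison with the convergent integral $\int_0^1(1+\log\tfrac1t)^2\,dt<\infty$ yields $\sum_{q\le\sqrt x}(1+\log(\sqrt x/q))^2\ll\sqrt x$. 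Since $Q\asymp\sqrt x$ by \eqref{Qequation}, this gives $I_F\ll\sqrt x$, as claimed.
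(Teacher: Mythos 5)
Your proof is correct and follows essentially the same route as the paper: the same substitution $u=qw$ (the paper's $u=jq$), the same geometric-series bound $\min\left(\frac{x}{qw},\frac{1}{\Vert qw\beta\Vert}\right)$ with $\Vert qw\beta\Vert = qw|\beta|$ on the arc, yielding the paper's estimate $F_q(\alpha)\ll \frac{1}{q}\log\frac{2\sqrt{x}}{q}\min\left(x,\frac{1}{|\beta|}\right)$ and the same final sum $\sum_{q\le\sqrt{x}}\left(1+\log\frac{\sqrt{x}}{q}\right)^2\ll\sqrt{x}$. Your only refinements --- noting explicitly that $F_q$ is independent of $a$ so the $a$-sum contributes $\varphi(q)$ rather than the paper's trivial factor $q$, and justifying the last sum by comparison with the convergent integral $\int_0^1\left(1+\log\frac{1}{t}\right)^2 dt$ --- are details the paper leaves implicit, and neither changes the argument.
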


\begin{lemma}   We have 
\[  I_G \ll \sqrt{x}\log x . \]
\end{lemma}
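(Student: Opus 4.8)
The plan is to bound $G_q(\alpha)$ pointwise on each of the small intervals around $a/q$ and then integrate trivially. Throughout, let $\|\beta\|$ denote the distance from $\beta$ to the nearest integer. The inner sum over $v$ is geometric, $\sum_{u<v\le x/u} e(uv\alpha) = \sum_{u<v\le x/u} e(v\cdot u\alpha)$ with ratio $e(u\alpha)$, so the standard estimate gives $\ll \min(x/u,\, \|u\alpha\|^{-1})$. The whole point of restricting $G_q$ to $q\nmid u$ is that then $ua\not\equiv 0\pmod q$ (as $(a,q)=1$), so $\|ua/q\|\ge 1/q>0$ and the second argument of the minimum is actually effective; the dangerous $q\mid u$ terms, where this fails, were already peeled off into $F_q$ and disposed of in Lemma 1.

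First I would convert $\|u\alpha\|$ into $\|ua/q\|$. On the interval of integration $\alpha = a/q + \delta$ with $|\delta|\le 1/(2q\sqrt x)$, so for $u\le\sqrt x$ one has $u\alpha = ua/q + u\delta$ with $|u\delta|\le 1/(2q)$. Writing $\|ua/q\| = c/q$ for an integer $c\ge 1$, the reverse triangle inequality for $\|\cdot\|$ yields $\|u\alpha\|\ge c/q - 1/(2q) = (2c-1)/(2q)\ge\tfrac12\|ua/q\|$, hence $\|u\alpha\|^{-1}\ll\|ua/q\|^{-1}$ uniformly in $\alpha$ on the interval. Therefore
\[ |G_q(\alpha)| \ll \sum_{\substack{u\le\sqrt x\\ q\nmid u}} \frac{1}{\|ua/q\|}\qquad\text{for all such }\alpha, \]
and since the right side is independent of $\alpha$ while the interval has length $1/(q\sqrt x)$, each pair $(a,q)$ contributes $\ll (q\sqrt x)^{-1}\sum_{u}\|ua/q\|^{-1}$ to $I_G$.

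The heart of the matter is to estimate $\sum_{u\le\sqrt x,\,q\nmid u}\|ua/q\|^{-1}$. Since $(a,q)=1$, the map $u\mapsto ua\bmod q$ permutes the residues, so as $u$ runs through $[1,\sqrt x]$ each nonzero class $r\pmod q$ is hit $\sqrt x/q + O(1)$ times. Grouping by $r$ and using $\|r/q\|^{-1} = q/\min(r,q-r)$ together with the harmonic-type bound $\sum_{r=1}^{q-1} q/\min(r,q-r)\ll q\log q$ gives $\sum_u\|ua/q\|^{-1}\ll (\sqrt x/q)\cdot q\log q + q\log q\ll\sqrt x\log q$, uniformly in $a$, where the hypothesis $q\le Q\ll\sqrt x$ is used to absorb the $O(1)$ counting error. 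This equidistribution-plus-harmonic-sum estimate is the step where I expect the real work to lie, and the logarithm it produces is the source of the extra $\log x$ in the theorem.

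Finally I would assemble the bound by summing over the $\phi(q)$ admissible $a$ and then over $q$:
\[ I_G \ll \sum_{q\le Q}\frac{1}{q\sqrt x}\cdot\phi(q)\cdot\sqrt x\log q \ll \sum_{q\le Q}\frac{\phi(q)}{q}\log q \ll \sum_{q\le Q}\log q \ll Q\log Q. \]
By \eqref{Qequation} we have $Q\ll\sqrt x$, so $Q\log Q\ll\sqrt x\log x$, which is the claim. Beyond the routine geometric-series and harmonic-sum inputs, the one point that genuinely needs care is verifying that the perturbation $u\delta$ never destroys the lower bound $\|ua/q\|\ge 1/q$; the choice $|\delta|\le 1/(2q\sqrt x)$ ensures this precisely because $u\le\sqrt x$.
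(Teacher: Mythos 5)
Your proposal is correct and follows essentially the same route as the paper: the geometric-series bound on the $v$-sum, the perturbation estimate $\|u\alpha\|\ge\tfrac12\|ua/q\|$ (valid since $q\nmid u$ gives $\|ua/q\|\ge 1/q$ while $|u\delta|\le 1/(2q)$), the harmonic-type sum $\ll q\log q$ over a period of residues, and trivial integration over the arcs. The only cosmetic differences are that you count hits per residue class ($\sqrt{x}/q+O(1)$ each) where the paper splits $[1,\sqrt{x}]$ into $\ll\sqrt{x}/q$ blocks, and you absorb the $q\log q$ term into $\sqrt{x}\log q$ pointwise using $q\le Q\ll\sqrt{x}$ where the paper keeps $(\sqrt{x}+q)\log q$ and absorbs it at the integration step.
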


In what follows we always assume $(a,q)=1$, and  define the new variable $\beta$ by
\begin{equation} \label{alpha} \alpha = \frac{a}{q} +\beta .  \end{equation}
\begin{proof}[Proof of Lemma 2]  The proof follows from the estimate 
\begin{equation} \label{Festimate}
F_q(\alpha)
 \ll \left\{ \begin{array}{ll}
      {\min\left(x,\frac{1}{|\beta|}\right)\frac{\log
\frac{2\sqrt{x}}{q}}{q} ,} & \mbox{if $q\le \sqrt{x}$, $ |\beta|\le \frac{1}{2q\sqrt{x}}$;} \\
      0, & \mbox{if $q>\sqrt{x}$;} \\
\end{array}
\right.
\end{equation}
since this implies
\[   \begin{split}   I_F &\ll \sum_{q\le \sqrt{x}} q \int_0^{1/(2q\sqrt{x})}\min\left(x, \frac{1}{|\beta|}\right) \frac{\log
\frac{2\sqrt{x}}{q}}{q}\, d\beta \\&
\ll \sum_{q\le \sqrt{x}} \log
\frac{2\sqrt{x}}{q}\left(\int_0^{1/(2x)} x \, d\beta +  \int_{1/(2x)}^{1/(2q\sqrt{x})} \frac{1}{\beta} \, d\beta \right) \\& \ll \sum_{q\le \sqrt{x}} \left(
\log
\frac{2\sqrt{x}}{q}\right)^2 \quad \ll \sqrt{x}.
\end{split} \]
To prove \eqref{Festimate}, we first note that the conditions $q|u$ and $u \le \sqrt{x}$ force $F_q(\alpha)=0$ when $q>\sqrt{x}$. Next, when $q\le \sqrt{x}$ we write $u=jq$ and have
\[ F_q(\alpha) = \sum_{j\le \frac{\sqrt{x}}{q}}\sum_{jq\le v\le \frac{x}{jq}} e( jqv\beta).\]
Making use of the estimate
\begin{equation} \label{basic} \sum_{N_1 < n \le N_2} e(n\alpha)  \ll  \min\left( N_2-N_1, \frac{1}{\Vert \alpha \Vert}\right)\end{equation}
we have 
\[ F_q(\alpha) \ll \sum_{j\le \frac{\sqrt{x}}{q}}\min\left(\frac{x}{jq}, \frac{1}{\Vert jq\beta \Vert}\right).\]
In this sum $jq\le \sqrt{x}$ so that $|jq\beta | \le |\beta| \sqrt{x}$, and hence the condition $ |\beta|\le \frac{1}{2q\sqrt{x}}$ implies $|jq\beta | \le \frac{1}{2q} \le \frac12$. Hence $ \Vert jq\beta \Vert = jq|\beta|$, and we have
\[ F_q(\alpha) \ll \sum_{j\le \frac{\sqrt{x}}{q} } \frac{1}{jq} \min\left(x, \frac{1}{|\beta|}\right)\ll \min(x, \frac{1}{|\beta|})\frac{\log\frac{2\sqrt{x}}{q}}{q}  .\] 
\end{proof}

\begin{proof}[Proof of Lemma 3] The proof follows from the estimate, 
\begin{equation} \label{Gestimate}
G_q(\alpha) \ll (\sqrt{x} + q)\log q , \quad \text{for} \  \alpha = \frac{a}{q} + \beta, \quad  |\beta| \le \frac{1}{2q\sqrt{x}}, \end{equation}
since this implies
\[   \begin{split}   I_G &\ll \sum_{q\le Q} q \int_0^{1/(2q\sqrt{x})}(\sqrt{x} +q) \log q\,  d\beta \\&
\ll  \frac{1}{\sqrt{x}}( Q(\sqrt{x} +Q)\log Q \ll \sqrt{x}\log x
\end{split} \]
by \eqref{Qequation}.
To prove \eqref{Gestimate}, we apply \eqref{basic} to the sum over $v$ in $G_q(\alpha)$ and obtain
\[  G_q(\alpha) \ll  \sum_{\substack{u\le\sqrt{x}\\ q\nmid u}} \min\left( \frac{x}{u},\frac{1}{\Vert u\alpha \Vert}\right).  \]
Recalling $\Vert x \Vert =  \Vert -x \Vert $ and the triangle inequality $\Vert  x+y\Vert \le \Vert x\Vert +\Vert y\Vert$, and using the conditions $1\le u\le \sqrt{x}$, $q\nmid u$, $ |\beta| \le \frac{1}{2q\sqrt{x}}$, we have
\[  \begin{split}\left \Vert u\alpha\right \Vert &\ge \left\Vert \frac{au}{q}\right \Vert - \Vert u \beta \Vert \\&
\ge \left \Vert \frac{au}{q} \right\Vert - u|\beta| \\&
\ge\left \Vert \frac{au}{q} \right\Vert -  \frac{u} {2q\sqrt{x}} \\&
\ge \left \Vert \frac{au}{q} \right \Vert -  \frac{1} {2q}   \\&
\ge \frac12\left \Vert \frac{au}{q} \right \Vert , \end{split} \]
and therefore
 \[  G_q(\alpha) \ll  \sum_{\substack{u\le\sqrt{x}\\ q\nmid u}} \frac{1}{\left\Vert \frac{au}{q} \right \Vert}.  \]
Here $\left\Vert \frac{au}{q} \right \Vert = \frac{b}{q}$ for some integer $1\le b \le \frac{q}{2}$ and since the  integers $\{au: 1\le u \le \frac{q}{2}\}$ are distinct modulo $q$ since $(a,q)=1$, we see
\[ \sum_{1\le u \le \frac{q}{2} }  \frac{1}{\left\Vert \frac{au}{q}\right \Vert} = \sum_{1\le b \le \frac{q}{2}}\frac{q}{b} \ll  q\log q .\]
If  $q> \sqrt{x}$  then 
\[  \sum_{\substack{u\le\sqrt{x}\\ q\nmid u}} \frac{1}{\left\Vert \frac{au}{q} \right \Vert} \le 2   \sum_{1\le u \le \frac{q}{2} }  \frac{1}{\left\Vert \frac{au}{q}\right \Vert} \ll q\log q ,\]
while if $q\le \sqrt{x}$ then
 the sum bounding $G_q(\alpha)$ can be split into  $\ll \frac{ \sqrt{x}}{q}$ sums of this type and 
 \[ G_q(\alpha) \ll  \frac{\sqrt{x}}{q} (q\log q )\ll \sqrt{x}\log q. \]
\end{proof}

\section{Proof of the lower bound}
Following Br\"udern, consider the intervals  $|\alpha - \frac{a}{q}| \le 1/(4x)$ for $1\le a\le q\le Q $, where we take $\frac12 \sqrt{x} \le Q \le \sqrt{x}$. These intervals are pairwise disjoint because for two distinct fractions
$|a/q - a'/q'| \ge 1/(qq') \ge 1/x$. (We will see later why these intervals have been chosen shorter than required to be disjoint.) Hence, using \eqref{alpha}
\[ \int_0^1 |S(\alpha)|\, d\alpha = \int_{1/Q}^{1+1/Q} |S(\alpha)|\, d\alpha  \ge \sum_{q\le \frac12\sqrt{x} }\sum_{\substack{a = 1\\ (a,q)=1}}^q \int_{-1/(4x)}^{1/(4x)} \left\vert S\left(\frac{a}{q} +\beta\right)\right\vert d\beta. 
\]
Next we follow Vaughan's method  \cite{Va} and apply the triangle inequality to obtain the lower bound
\[ \int_0^1 |S(\alpha)|\, d\alpha  \ge \sum_{q\le \frac12\sqrt{x} } \int_{-1/(4x)}^{1/(4x)} \left\vert \sum_{\substack{a = 1\\ (a,q)=1}}^qS\left(\frac{a}{q} +\beta\right)\right\vert d\beta. 
\]
Letting
\begin{equation} \label{Uq}  U_q(x;\beta) := \sum_{\substack{a = 1\\ (a,q)=1}}^qS\left(\frac{a}{q} +\beta\right) = \sum_{n\le x} \tau(n) c_q(n)e(n\beta) , \end{equation}
where 
\[ c_q(n) = \sum_{\substack{a = 1\\ (a,q)=1}}^q e\left(\frac{an}{q}\right) \]
is the Ramanujan sum,  our lower bound may now be written as
\begin{equation} \label{lowerbound} \int_0^1 |S(\alpha)|\, d\alpha  \ge \sum_{q\le \frac12\sqrt{x} } \int_{-1/(4x)}^{1/(4x)} \left\vert U_q(x;\beta)\right\vert d\beta. \end{equation}
To complete the proof of the lower bound we need the following lemma, which we prove at the end of this section. 
\begin{lemma}
\label{Usum}
For $	q\ge 1$ we have 
\begin{align}
U_q(x; 0) = \frac{\varphi(q)}{q}x(\log (x/q^2) + 2\gamma - 1) + O(q\tau(q) (x^{\frac13} + q^{\frac12})x^\epsilon),
\end{align}
where $\gamma$ is Euler's constant. 
\end{lemma}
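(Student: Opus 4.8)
The plan is to expand the Ramanujan sum and reduce $U_q(x;0)$ to divisor sums over multiples, to which the Pongsriiam--Vaughan estimate applies directly. Using $c_q(n)=\sum_{d\mid(q,n)}\mu(q/d)\,d$ in \eqref{Uq} with $\beta=0$ and interchanging the order of summation,
\[ U_q(x;0)=\sum_{d\mid q}\mu(q/d)\,d\sum_{\substack{n\le x\\ d\mid n}}\tau(n)=\sum_{d\mid q}\mu(q/d)\,d\,D(x;d), \]
where $D(x;d):=\sum_{\substack{n\le x\\ d\mid n}}\tau(n)=\sum_{m\le x/d}\tau(dm)$ is the divisor sum over the residue class $0\bmod d$. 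For each $d\mid q$ I would invoke the Pongsriiam--Vaughan asymptotic for $D(x;d)$, which supplies a main term $M(x;d)$ arising from the double pole at $s=1$ of $\sum_m\tau(dm)m^{-s}$, together with an error of size $O\big(\tau(d)(x^{1/3}+d^{1/2})x^\epsilon\big)$.

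The cleanest way to control the \emph{combined} main term is through the Dirichlet series of $U_q$ itself. Since $\sum_m\tau(dm)m^{-s}=\zeta(s)^2H_d(s)$ with $H_d(s)=\prod_{p^a\| d}\big(1+a(1-p^{-s})\big)$, summing over $d\mid q$ gives
\[ \sum_{n\ge1}\frac{\tau(n)c_q(n)}{n^s}=\zeta(s)^2K_q(s),\qquad K_q(s)=\sum_{d\mid q}\mu(q/d)\,d^{1-s}H_d(s), \]
and one checks that $\sum_{d\mid q}\mu(q/d)\,d\,M(x;d)$ is exactly the residue of $\zeta(s)^2K_q(s)x^s/s$ at $s=1$, namely $K_q(1)x(\log x+2\gamma-1)+K_q'(1)x$. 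Now $K_q$ is multiplicative, and on prime powers, writing $u=p^{-s}$ and evaluating at $s=1$, one finds $K_{p^a}(1)=1-p^{-1}$, while differentiating the explicit finite expression $K_{p^a}(s)$ once gives $K_{p^a}'(1)=-2a(\log p)(1-p^{-1})$. Hence $K_q(1)=\varphi(q)/q$ and $K_q'(1)/K_q(1)=-2\log q$, and substituting these two identities collapses the combined main term to exactly $\frac{\varphi(q)}{q}x(\log(x/q^2)+2\gamma-1)$.

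It remains to sum the errors. Since $d\mid q$ forces $\tau(d)\le\tau(q)$, and $\sum_{d\mid q}d=\sigma(q)\le q\tau(q)$, we obtain $\sum_{d\mid q}d\tau(d)\ll q\tau(q)x^\epsilon$ and $\sum_{d\mid q}d^{3/2}\tau(d)\ll q^{1/2}\sum_{d\mid q}d\tau(d)\ll q^{3/2}\tau(q)x^\epsilon$, so that
\[ \sum_{d\mid q}d\cdot O\big(\tau(d)(x^{1/3}+d^{1/2})x^\epsilon\big)\ll q\tau(q)(x^{1/3}+q^{1/2})x^\epsilon \]
after renaming $\epsilon$, which is the claimed error. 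I expect the main obstacle to be the main-term bookkeeping rather than the error estimate: each $D(x;d)$ carries its own $\log(x/d)$ and $d$-dependent multiplicative constants $H_d(1),H_d'(1)$, and it is not evident a priori that the alternating sum over $d\mid q$ telescopes into the stated closed form. The crux is precisely the pair of prime-power identities for $K_{p^a}(1)$ and $K_{p^a}'(1)$, which I would verify carefully and then extend to general $q$ by multiplicativity. A secondary point requiring attention is the uniformity in $d$ of the Pongsriiam--Vaughan error, which is needed in the above shape for every divisor $d$ of $q$, including those comparable to $q$.
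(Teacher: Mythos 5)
Your proposal is correct, and its skeleton matches the paper's: both expand $c_q(n)=\sum_{d\mid (q,n)}d\,\mu(q/d)$, apply the Pongsriiam--Vaughan formula in the case $a=0$ to each $\sum_{n\le x,\ d\mid n}\tau(n)$, and then sum the errors over $d\mid q$. Where you genuinely diverge is the main-term evaluation, which is exactly the point you flagged as the crux. The paper's device is to notice that the Pongsriiam--Vaughan main term for the class $0 \bmod d$ is already a divisor sum: writing it as $\frac{x}{d}f_x(d)$ with $f_x=g_x*1$ and $g_x(r)=\frac{\varphi(r)}{r}(\log(x/r^2)+2\gamma-1)$, the alternating sum collapses in one line, $\sum_{d\mid q}\mu(q/d)f_x(d)=(\mu*1*g_x)(q)=g_x(q)$, with no computation at all. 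You instead pass to the Dirichlet series $\zeta(s)^2K_q(s)$ of $\tau(n)c_q(n)$ and evaluate the residue at $s=1$ via the prime-power identities; these are correct --- with $F_b(s)=p^{b(1-s)}\bigl(1+b(1-p^{-s})\bigr)$ one finds $F_b'(1)=-b(b+1)(\log p)(1-p^{-1})$, and $K_{p^a}=F_a-F_{a-1}$ gives $K_{p^a}(1)=1-p^{-1}$ and $K_{p^a}'(1)=-2a(\log p)(1-p^{-1})$ as you claim --- so indeed $K_q(1)=\varphi(q)/q$, $K_q'(1)/K_q(1)=-2\log q$, and the combined main term is the stated one. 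Your route buys a structural explanation (the answer is forced by the Euler product at $s=1$) at the cost of one verification the paper never needs: that the Pongsriiam--Vaughan closed-form main term coincides with the residue of $\zeta(s)^2H_d(s)(x/d)^s/s$, equivalently $\sum_{r\mid d}\varphi(r)/r=H_d(1)$ and $-2\sum_{r\mid d}\frac{\varphi(r)}{r}\log r=H_d'(1)-H_d(1)\log d$; both reduce routinely to prime powers, but your ``one checks'' must include them, since quoting the theorem with a redefined main term is otherwise circular. Two small points on the error term: the Pongsriiam--Vaughan error is $O\bigl((x^{1/3}+d^{1/2})x^\epsilon\bigr)$ uniformly in $d$, with no $\tau(d)$ factor, so your extra $\tau(d)$ is a harmless weakening --- but it forces you to absorb $\tau(q)$ into $x^\epsilon$, which tacitly assumes $q\le x$ (true in the application, where $q\le\frac12\sqrt{x}$, though the lemma is stated for all $q\ge1$); the paper's simpler bound $\sum_{d\mid q}d=\sigma(q)\le q\tau(q)$ yields the claimed error directly and also disposes of the uniformity-in-$d$ concern you raised.
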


\begin{proof}[Proof of the lower bound in Theorem 1]  For any exponential sum
$T(x; \beta) =\sum_{n\le x} a_n e(n\beta)$ we have by partial summation or direct verification
\[ T(x; \beta) =  e(\beta x) T(x;0) - 2\pi i \beta \int_1^x e(\beta y)  T(y;0) \, dy.\] 
Taking $T(x;\beta) = U_q(x;\beta)$ we thus obtain from \eqref{lowerbound} and the triangle inequality
\begin{equation} \label{lower} \int_0^1 |S(\alpha)|\, d\alpha  \ge \sum_{q\le \frac12\sqrt{x} } \int_{-1/(4x)}^{1/(4x)} \left( |U_q(x;0)| - 2\pi |\beta | \int_1^x |U_q(y; 0)| \, dy \right) d\beta.\end{equation}
By Lemma 4,  with $q\le \frac12\sqrt{x}$, 
\[ \begin{split}    \int_1^x |U_q(y; 0)| \, dy & \le \frac{\varphi(q)}{q} \left( \int_1^x  y | \log(y/q^2)| +(2\gamma -1)y \, dy\right)  +   O(xq\tau(q) (x^{\frac13} + q^{\frac12})x^\epsilon) \\&
\le  \frac{\varphi(q)}{q} \left( \int_1^{q^2}  y \log(q^2/y) \, dy + \int_{q^2}^x  y \log(y/q^2) \, dy+ \frac12 x^2 (2\gamma -1) \right) \\ & \hskip 2.5in +   O(xq\tau(q) (x^{\frac13} + q^{\frac12})x^\epsilon) \\&
= \frac{x}{2}\left( \frac{\varphi(q)}{q}\left( x(\log (x/q^2)+ 2\gamma -1) - \frac{x}{2}  + \frac{q^4}{x} \right) +   O(q\tau(q) (x^{\frac13} + q^{\frac12})x^\epsilon)\right)\\&
\le \frac{x}{2} U_q(x,0)  +   O(q\tau(q) (x^{\frac13} + q^{\frac12})x^\epsilon).
\end{split}\]
Using $|\beta | \le 1/(4x)$, we have
\[  |U_q(x;0)| - 2\pi \beta \int_1^x |U_q(y; 0)| \, dy  \ge \left( 1 - \frac{\pi}{4} \right)|U_q(x;0)|  -  O(q\tau(q) (x^{\frac13} + q^{\frac12})x^\epsilon). \]
We conclude, returning to \eqref{lower} and making use of Lemma 4 again, 
\[ \begin{split}  \int_0^1 |S(\alpha)|\, d\alpha & \ge \frac{4-\pi}{8 x} \sum_{q\le \frac12\sqrt{x} } \left(|U_q(x;0)| - O(q\tau(q) (x^{\frac13} + q^{\frac12})x^\epsilon)\right) \\&
\ge \frac{4-\pi}{8} \sum_{q\le \frac12\sqrt{x} } \frac{\varphi(q)}{q}(\log (x/q^2) + 2\gamma - 1) -O(x^{\frac13+\epsilon}) .\end{split} \]
It is easy to see that the sum above is $\gg  \sqrt{x} $ which suffices to proves the lower bound.  More precisely, using 
\[ \frac{\varphi(n)}{n} = \sum_{d|n} \frac{ \mu(d)  }{d} \]
a simple calculation gives the well-known result
\[ \sum_{ n\le x} \frac{\varphi(n)}{n} = \frac{6}{\pi^2} x +O(\log x), \]
and then by partial summation we find 
\[ \sum_{q\le \frac12\sqrt{x} } \frac{\varphi(q)}{q}(\log (x/q^2) + 2\gamma - 1)\sim \frac{6}{\pi^2}(\log 2 +\gamma -1)\sqrt{x} .\]
\end{proof}
\begin{proof}[Proof of Lemma 4]  Pongsriiam and Vaughan \cite{PV} recently proved the following very useful result on the divisor 
function in arithmetic progressions.  For inteqer $a$ and $d\ge 1$ and real $x\ge 1$ we have
\[  \sum_{\substack{ n\le x \\ n\equiv a(\text{mod}\, d)}}\tau(n) = \frac{x}{d} \sum_{r | d}\frac{c_r(a)}{r}\left(\log\frac{x}{r^2} + 2\gamma -1\right) +O( (x^{\frac13} + d^{\frac12})x^\epsilon),\]
where $\gamma$ is Euler's constant and $c_r(a)$ is the Ramanujan sum. We need the special case when $a=0$ which along with the situation $(a,d)>1$ is explicitly allowed in this formula.  Hence we have 
\begin{equation} \label{PV}
  \sum_{\substack{ n\le x \\ d|n}} \tau(n) = \frac{x}{d}f_x(d) + O( (x^{\frac13} + d^{\frac12})x^\epsilon)
\end{equation}
where 
\begin{equation}\label{f-g}
f_x(d) = \sum_{r|d} g_x(r),    \quad   g_x(r) = \frac{\varphi(r)}{r}(\log(x/r^2) + 2\gamma - 1).  
\end{equation}
Making use of 
\[
c_q(n) = \sum_{d|(n, q)}d\mu\left(\frac{q}{d}\right),
\]
and  \eqref{PV} we have 
\[ \begin{split} U_q(x;0) &= \sum_{n\le x}\tau(n)c_q(n)\\&
= \sum_{d|q}d\mu\left(\frac{q}{d}\right)\us{d|n}{\sum_{n\le x}}\tau(n) \\& = 
x\sum_{d|q}\mu\left(\frac{q}{d}\right)f_x(d) +  O( q\tau(q)(x^{\frac13} + d^{\frac12})x^\epsilon).
 \end{split} \]
We evaluate the sum above using  Dirichlet convolution and the identity $1*\mu  = \delta$ where $\delta(n)$ is the identity for Dirichlet convolution defined to be 1 if $n=1$ and zero otherwise.  Hence  
\[ \sum_{d|q}\mu\left(\frac{q}{d}\right)f_x(d) =  (f_x * \mu)(q)   = ( (g_x*1)*\mu)(q) =   (g_x * \delta)(q)  = g_x(q),
\]
and Lemma 4 is proved. 
\end{proof}

\vspace{1cm} \footnotesize D. A. Goldston  \,\,\,
(daniel.goldston@sjsu.edu)

Department of Mathematics and Statistics

San Jose State University

San Jose, CA 95192

 USA \\

\vspace{1cm} \footnotesize M. Pandey  \,\,\,
(mayankpandey9973@gmail.com)

12861 Regan Lane

Saratoga, CA 95070

 USA \\

\end{document}